\numberwithin{equation}{section}
\newtheorem{theorem}{Theorem}[section]
\newtheorem{proposition}[theorem]{Proposition}
\newtheorem{lemma}[theorem]{Lemma}
\newtheorem{corollary}[theorem]{Corollary}
\theoremstyle{definition}
\newtheorem{assumption}[theorem]{Assumption}
\newcommand{\CP}{\mathbb{CP}} \newcommand{\CC}{\mathbb{C}}
 \newcommand{\scrH}{\mathcal{H}}
\newcommand{\cO}{\mathcal{O}}
\DeclareMathOperator*{\PGL}{PGL}
\DeclareMathOperator*{\SL}{SL}
\begin{document}

\baselineskip=15pt

\title[Equivariant vector bundles on $\mathbb{CP}^1$]{On the equivariant vector bundles on $\mathbb{CP}^1$}

\author[I. Biswas]{Indranil Biswas}

\address{Department of Mathematics, Shiv Nadar University, NH91, Tehsil
Dadri, Greater Noida, Uttar Pradesh 201314, India}

\email{indranil.biswas@snu.edu.in, indranil29@gmail.com}

\author[M. Kumar]{Manish Kumar}

\address{Statistics and Mathematics Unit, Indian Statistical Institute,
Bangalore 560059, India}

\email{manish@isibang.ac.in}

\author[A. J. Parameswaran]{A. J. Parameswaran}

\address{Kerala School of Mathematics, Kunnamangalam PO, Kozhikode, Kerala, 673571, India}

\email{param@ksom.res.in}

\subjclass[2010]{14H60, 14F06, 14L30}

\keywords{Projective line, equivariant bundle, semistability, Harder-Narasimhan filtration}

\date{}

\begin{abstract}
Let $H$ be a subgroup of $\PGL(2,\CC)$ (respectively, ${\rm SL}(2,\CC)$) such that the Zariski closure
in $\PGL(2,\CC)$ (respectively, ${\rm SL}(2,\CC)$) of some compact subgroup of $H$ contains $H$.
We classify the $H$--equivariant holomorphic vector bundles on $\mathbb{CP}^1$. This generalizes
\cite{BM} where $H$ was assumed to be a finite abelian group.
\end{abstract}

\maketitle

\section{Introduction}

Given a finite abelian group $H$ with a faithful action on $\mathbb{CP}^1$, in \cite{BM} a complete description of all
$H$-equivariant vector bundles on $\mathbb{CP}^1$ was given. As mentioned in \cite{BM}, it was very much inspired by
\cite{DF}. In this paper, we extend the result to any closed topological subgroup $H$ of $\PGL(2,\CC)$ (respectively, ${\rm SL}(2,\CC)$) satisfying the hypothesis that it contains a compact subgroup $K_H$ whose Zariski closure in $\PGL(2,\CC)$ (respectively, ${\rm SL}(2,\CC)$) contains $H$. All finite subgroups
of $\PGL(2,\CC)$ (respectively, ${\rm SL}(2,\CC)$) clearly satisfy this property.

Note that $\PGL(2,\CC)\cong \SL(2,\CC)/{\pm I}$. Let $G$ be a closed subgroup of $\SL(2,\CC)$ such that the 
Zariski closure of a compact subgroup $K_G$ contains $G$. Consider the action of $G$ on $\CP^1$ coming from the 
$SL(2,\CC)$-action.

Let $G$ be a topological subgroup of ${\rm SL}(2,\CC)$ such that there is a compact subgroup $K_G\, \subset\, G$ whose
Zariski closure in ${\rm SL}(2,\CC)$ contains $G$. we prove the following:

\begin{enumerate}
 \item Every $G$-equivariant vector bundle on $\CP^1$ splits holomorphically as a direct sum of $G$--equivariant line bundles.

 \item Every vector bundle on $\CP^1$ has a $G$-equivariant structure.

 \item Any $G$--equivariant vector bundle on $\CP^1$ is isomorphic to one of the form
$\bigoplus_{i=1}^r{\cO}_{\CP^1}(d_i)\otimes_{\CC}M_i$, where each $M_i$ is finite dimensional $\CC[G]$--module and $\cO(d_i)$
is equipped with the natural $G$--equivariant structure.
\end{enumerate}

Let $H$ be a closed subgroup of $\PGL(2,\CC)$ satisfying the above hypothesis. There is a dichotomy.
Let $\scrH$ be the inverse image of $H$ in $\SL(2,\CC)$ for the quotient map $\SL(2,\CC)\,
\longrightarrow\, \PGL(2,\CC)$. Consider the short exact sequence of groups
$$1\,\longrightarrow\, \{\pm I\}\,\longrightarrow\,\scrH\,\longrightarrow\, H\,\longrightarrow\, 1.$$

If the above short exact sequence splits, then we are in the above setup, i.e., $H$ is a subgroup of $\SL(2,\CC)$, and
the above results hold. If the short exact sequence does not split, then any $H$--equivariant vector bundle on $\CP^1$
is isomorphic to one of the form $$
\left(\bigoplus_{i=1}^r {\mathcal O}_{{\mathbb C}{\mathbb P}^1}(2d_i)\otimes_{\mathbb C} M_i\right)
\bigoplus \left(\bigoplus_{i=1}^s {\mathcal O}_{{\mathbb C}{\mathbb P}^1}(2\delta_i+1)\otimes_{\mathbb C} N_i\right),
$$
where each $M_i$ (respectively, $N_i$) is a finite dimensional $\CC[H]$--module (respectively, $\CC[\scrH]$--module such that the action of the nontrivial central element $-I\,\in \, \mathcal H$ on $N_i$ is multiplication by $-1$.

\section{Automorphisms of ${\mathbb C}{\mathbb P}^1$ and equivariant bundles} 

\subsection{Equivariant vector bundles}

Consider the complex projective line ${\mathbb C}{\mathbb P}^1$ parametrizing the one-dimensional quotients
of ${\mathbb C}^2$. The standard action of $\text{SL}(2,{\mathbb C})$ on ${\mathbb C}^2$ produces an action
of $\text{SL}(2,{\mathbb C})$
\begin{equation}\label{e1}
\rho\ :\ \text{SL}(2,{\mathbb C})\times{\mathbb C}{\mathbb P}^1\ \longrightarrow\ {\mathbb C}{\mathbb P}^1
\end{equation}
on ${\mathbb C}{\mathbb P}^1$. Let
\begin{equation}\label{e2}
\widehat{\rho}\ :\ \text{SL}(2,{\mathbb C})\ \longrightarrow\ \text{Aut}({\mathbb C}{\mathbb P}^1)
\end{equation}
be the homomorphism given by $\rho$, where $\text{Aut}({\mathbb C}{\mathbb P}^1)$ is the group of 
all holomorphic automorphisms of ${\mathbb C}{\mathbb P}^1$. Since $\widehat{\rho}(\pm I)\,=\,
{\rm Id}_{{\mathbb C}{\mathbb P}^1}$, the homomorphism $\widehat{\rho}$ produces a homomorphism
\begin{equation}\label{e3}
\widehat{\sigma}\, :\, \text{PGL}(2,{\mathbb C})\, :=\, \text{SL}(2,{\mathbb C})/\{\pm I\}\, \longrightarrow\,
\text{Aut}({\mathbb C}{\mathbb P}^1)
\end{equation}
which is actually an isomorphism. Let
\begin{equation}\label{e4}
\sigma\ :\ \text{PGL}(2,{\mathbb C})\times{\mathbb C}{\mathbb P}^1\ \longrightarrow\ {\mathbb C}{\mathbb P}^1
\end{equation}
be the action of $\text{PGL}(2,{\mathbb C})$ on ${\mathbb C}{\mathbb P}^1$ given by the homomorphism $\widehat{\sigma}$ 
in \eqref{e3}.

Let $$G\, \subset\, \text{SL}(2,{\mathbb C})$$ be a closed topological subgroup. It should be clarified
that $G$ need not be a complex subgroup. Also, $G$ need not be connected. 

\begin{assumption}\label{assm}
We assume that there is a compact subgroup $K_G\, \subset\, G$ such that Zariski closure of $K_G$ in
${\rm SL}(2,{\mathbb C})$ contains $G$.
\end{assumption}

Note that any finite subgroup of $\text{SL}(2,{\mathbb C})$ satisfies Assumption \ref{assm}. More generally, any
reductive subgroup of $\text{SL}(2,{\mathbb C})$ satisfies Assumption \ref{assm}.

A $G$--\textit{equivariant} vector bundle is a holomorphic vector bundle $p\, :\, E\, \longrightarrow\, {\mathbb C}{\mathbb P}^1$
equipped with an action
\begin{equation}\label{e5}
\varphi\ :\ G\times E\ \longrightarrow\ E
\end{equation}
such that
\begin{enumerate}
\item $p\circ \varphi (g,\, v)\,=\, \rho(g,\, p(v))$ for all $(g,\, v)\, \in\, G\times E$, where $\rho$ is the map in \eqref{e1}, and

\item the map $E\, \longrightarrow\, E$ defined by $v\,\longmapsto\, \varphi (g,\, v)$ is a holomorphic isomorphism of vector bundles
over the automorphism $\widehat{\rho}(g)$, for all $g\, \in\, G$, where $\widehat{\rho}$ is the homomorphism in \eqref{e2}.
\end{enumerate}

For any $g\, \in\, G$ and $v\, \in\, E$, the notation
\begin{equation}\label{n1}
g\cdot v \ := \ \varphi(g,\, v)
\end{equation}
will be used for convenience.

Let ${\mathcal O}_{{\mathbb C}{\mathbb P}^1}(1)$ be the tautological line bundle on ${\mathbb C}{\mathbb P}^1$. The
standard action of $\text{SL}(2,{\mathbb C})$ on ${\mathbb C}^2$ produces an action of
$\text{SL}(2,{\mathbb C})$ on ${\mathcal O}_{{\mathbb C}{\mathbb P}^1}(1)$. So ${\mathcal O}_{{\mathbb C}{\mathbb P}^1}(1)$
is a $G$--equivariant line bundle. Hence ${\mathcal O}_{{\mathbb C}{\mathbb P}^1}(n)\,=\,
{\mathcal O}_{{\mathbb C}{\mathbb P}^1}(1)^{\otimes n}$ is also $G$--equivariant for all $n\, \in\, {\mathbb Z}$.

\subsection{Equivariant for a subgroup of $\text{SL}(2,{\mathbb C})$}

Take a $G$--equivariant vector bundle $p\, :\, E\, \longrightarrow\, {\mathbb C}{\mathbb P}^1$. Let
\begin{equation}\label{e6}
0 \,=\, E_0\,\subset \, E_1\,\subset\, \cdots \, \subset\, E_{\ell-1}\, \subset\, E_\ell\,=\, E
\end{equation}
be the Harder--Narasimhan filtration of $E$ (see \cite[p.~16, Theorem 1.3.4]{HL} for Harder--Narasimhan filtration).

\begin{lemma}\label{lem1}
The action of $G$ on $E$ (see \eqref{e5}) preserves the filtration in \eqref{e6}.
\end{lemma}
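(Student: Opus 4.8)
The plan is to exploit the uniqueness of the Harder--Narasimhan filtration together with the fact that the $G$--action on $E$ is by holomorphic automorphisms covering automorphisms of $\mathbb{CP}^1$. First I would fix $g\in G$ and consider the map $v\longmapsto g\cdot v$; by condition (2) in the definition of an equivariant bundle, this is an isomorphism of $E$ onto the pullback bundle $\widehat{\rho}(g)^*E$, or more precisely it identifies $E$ with its own image under the automorphism $\widehat{\rho}(g)$ of the base. The key point is that pulling back along an automorphism of $\mathbb{CP}^1$ preserves degrees of subbundles and ranks, hence preserves slopes, and therefore carries the Harder--Narasimhan filtration of $E$ to the Harder--Narasimhan filtration of $\widehat{\rho}(g)^*E$. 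Concretely, the image $g\cdot E_i$ is a subsheaf of $E$ whose successive quotients have the same slopes as those of the filtration \eqref{e6}, and it is saturated, so by the uniqueness part of \cite[Theorem 1.3.4]{HL} we must have $g\cdot E_i = E_i$ for every $i$.

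The steps, in order: (i) recall that for any holomorphic automorphism $\psi$ of $\mathbb{CP}^1$ and any vector bundle $F$, the pullback $\psi^*$ is an exact autoequivalence of the category of coherent sheaves on $\mathbb{CP}^1$ which preserves rank and degree, hence slope and (semi)stability; (ii) deduce that $\psi^*$ sends the Harder--Narasimhan filtration of $F$ to that of $\psi^*F$; (iii) observe that, via the isomorphism $v\mapsto g\cdot v$ over $\widehat{\rho}(g)$, the subbundle $E_i\subset E$ is carried to a subbundle $g\cdot E_i\subset E$, and the datum $(g\cdot E_i)$ is precisely the Harder--Narasimhan filtration of $E$ viewed through this isomorphism, so it is a filtration of $E$ with the same ranks and slopes, each term saturated; (iv) invoke uniqueness of the Harder--Narasimhan filtration to conclude $g\cdot E_i=E_i$. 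Since $g\in G$ was arbitrary, the $G$--action preserves \eqref{e6}.

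I expect essentially no serious obstacle here; the only point requiring a little care is to phrase precisely in what sense the image $g\cdot E_i$ is again a subsheaf of $E$ rather than of a pullback. This is handled by noting that $\widehat{\rho}(g)$ is an automorphism of the \emph{same} $\mathbb{CP}^1$, so the map $E\to E$, $v\mapsto g\cdot v$, is a genuine $\mathbb{C}$--linear bijection of the total space which is holomorphic and fibrewise linear over the base automorphism; its restriction to the total space of $E_i$ has image a holomorphic subbundle of $E$ of the same rank, and the induced map on determinants shows the degree is unchanged. Once that is in place, the uniqueness of the Harder--Narasimhan filtration does all the work; no use of Assumption \ref{assm} or of compactness of $K_G$ is needed for this lemma, which is purely a statement about a single automorphism acting compatibly on $E$.
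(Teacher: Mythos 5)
Your proof is correct and follows the same route as the paper: transport the filtration by the action of $g$, observe that the resulting filtration of $E$ again satisfies the defining properties of the Harder--Narasimhan filtration (since the automorphism of the base preserves ranks, degrees, and hence slopes and semistability), and conclude by uniqueness. The paper's own proof is just a terser version of exactly this argument, and your observation that Assumption \ref{assm} plays no role here is also consistent with it.
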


\begin{proof}
Let $\varphi$ be the action of $G$ on $E$ (see \eqref{e5}). For any $g\, \in \, G$, the filtration
$$
0 \,=\, \varphi(g,\, E_0)\,\subset \, \varphi(g,\, E_1)\,\subset\, \cdots \, \subset\, \varphi(g,\, E_{\ell-1})
\, \subset\, \varphi(g,\, E_\ell)\,=\, E
$$
clearly satisfies all the conditions for a Harder--Narasimhan filtration. So the lemma follows immediately from the
uniqueness of the Harder--Narasimhan filtration (see \cite[p.~16, Theorem 1.3.4]{HL}).
\end{proof}

\begin{proposition}\label{prop1}
Let
$$
0\, \longrightarrow\, A \, \longrightarrow\, B\, \stackrel{q}{\longrightarrow}\, C \, \longrightarrow\, 0
$$
be a short exact sequence of $G$--equivariant vector bundles on ${\mathbb C}{\mathbb P}^1$. If this
short exact sequence splits holomorphically, then it admits a holomorphic $G$--equivariant splitting. 
\end{proposition}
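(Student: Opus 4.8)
The plan is to realize the holomorphic splittings as the points of an affine space carrying an affine $G$--action, to produce a $K_G$--fixed point by averaging over the compact group $K_G$, and finally to promote $K_G$--invariance to $G$--invariance by a Zariski density argument. First I would set up the affine space. A holomorphic splitting of the sequence is an $\cO_{\CP^1}$--linear bundle map $s\colon C\to B$ with $q\circ s=\mathrm{id}_C$; writing $\mathcal S$ for the set of all such $s$, the hypothesis says $\mathcal S\neq\varnothing$, and $\mathcal S$ is a nonempty affine subspace of the finite--dimensional $\CC$--vector space $V:=\mathrm{Hom}_{\cO_{\CP^1}}(C,B)=H^0(\CP^1,\,\mathcal{H}om(C,B))$, namely the preimage of $\mathrm{id}_C$ under post--composition with $q$; it is a torsor over $W:=\mathrm{Hom}_{\cO_{\CP^1}}(C,A)=H^0(\CP^1,\,\mathcal{H}om(C,A))$, since two splittings differ by a bundle map $C\to B$ that factors through $\ker q=A$. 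Because $A$, $B$, $C$ are $G$--equivariant and $q$ is $G$--equivariant, $G$ acts $\CC$--linearly on $V$ preserving the subspace $W$, and since the section $\mathrm{id}_C$ of $\mathcal{H}om(C,C)$ is $G$--invariant, the affine subspace $\mathcal S$ is $G$--stable. As $\mathcal S$ is convex and $K_G$ is compact, for any fixed $s_0\in\mathcal S$ the average $s_1:=\int_{K_G}(k\cdot s_0)\,d\mu(k)$ against normalized Haar measure on $K_G$ lies in $\mathcal S$ and is fixed by $K_G$.

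The remaining and essential point is to show that this $K_G$--fixed $s_1$ is in fact $G$--fixed, and here Assumption \ref{assm} enters. Let $R\subset\SL(2,\CC)$ be the Zariski closure of $K_G$; then $R$ is a reductive algebraic group, $K_G$ is Zariski dense in $R$, and $G\subset R$. I would like to use that the linear action of $G$ on $V$ is the restriction of an algebraic (rational) representation of $R$ on $V$ --- equivalently, that the $G$--equivariant structures on $A$, $B$, $C$, which cover the \emph{algebraic} action of $\SL(2,\CC)$ on $\CP^1$, are governed by the reductive group $R$ (for instance because a continuous finite--dimensional representation of the Zariski--dense compact subgroup $K_G$ of $R$ extends uniquely to an algebraic representation of $R$). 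Granting this, the map $R\to V$, $g\mapsto g\cdot s_1-s_1$, is a morphism of varieties which vanishes on the Zariski--dense subset $K_G$, hence vanishes identically on $R$; in particular $g\cdot s_1=s_1$ for all $g\in G$, so $s_1$ is a $G$--equivariant holomorphic splitting. As an alternative to this last step, once the algebraic $R$--action on $V$ is available one may simply invoke complete reducibility of $R$--modules: choose an $R$--submodule complement $W'$ to $W$ in $V$, and observe that $\mathcal S\cap W'$ consists of a single point, which is then necessarily $R$--fixed, hence $G$--fixed.

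The step I expect to be the main obstacle is precisely the algebraicity input above: that the $G$--action on $V$ (equivalently, the equivariant structures of $A$, $B$, $C$) extends to, or is controlled by, an algebraic action of the reductive Zariski closure $R=\overline{K_G}$. Without an input of this kind one cannot transfer $K_G$--invariance to $G$--invariance, since the stabilizer of $s_1$ in $G$ is a priori only a closed subgroup containing $K_G$, and a closed subgroup of $G$ containing $K_G$ need not equal $G$. Once that point is settled, the affine--space description, the Haar averaging over $K_G$, and the ``morphism vanishing on a Zariski--dense set'' conclusion are all routine; the only mild care required earlier is the verification that $G$ preserves $\mathcal S$, which follows from $G$--equivariance of $q$ together with $G$--invariance of $\mathrm{id}_C$.
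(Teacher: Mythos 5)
Your proposal is, at its core, the same argument as the paper's: fix a holomorphic splitting, average its $G$--translates over $K_G$ against Haar measure to obtain a $K_G$--equivariant splitting, then invoke Zariski density of $K_G$ in a group containing $G$ to upgrade $K_G$--equivariance to $G$--equivariance. Your repackaging via the affine space $\mathcal S\subset V=H^0(\CP^1,\text{Hom}(C,B))$ is equivalent to the paper's pointwise averaging of the maps $\psi_g$, and that part is correct and routine. The substantive issue is exactly the one you flag: to run the density argument you need the $G$--action on $V$ to be the restriction of an algebraic action of $R=\overline{K_G}^{\,\mathrm{Zar}}$. The paper's own proof disposes of this step in a single unjustified sentence, so you have correctly located the crux.

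However, your proposed way of supplying the algebraicity does not close the gap. Extending the continuous $K_G$--representation on $V$ to an algebraic $R$--representation (possible, since $R$ is the complexification of $K_G$) only yields an algebraic action agreeing with the given one \emph{on} $K_G$; nothing in the paper's definition of an equivariant structure forces the given action of $G$ on $V$ to agree with that extension at points of $G\setminus K_G$. And the difficulty is not removable by a better argument: with the definition as literally stated (each $g$ acts by a holomorphic bundle automorphism, with no algebraicity in $g$), the proposition fails. Take $G=\{\mathrm{diag}(t,t^{-1})\}\cong\CC^{*}$ with $K_G=U(1)$ (whose Zariski closure is all of $G$), and let $E=\CP^1\times\CC^2$ with $g=\mathrm{diag}(t,t^{-1})$ acting by $(x,v)\longmapsto(g\cdot x,\,\chi(t)v)$, where $\chi(t)$ is the unipotent upper--triangular matrix with off--diagonal entry $\log|t|$. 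This is a continuous action by holomorphic bundle automorphisms preserving $A=\CP^1\times\CC e_1$, and the extension $0\to A\to E\to E/A\to 0$ splits holomorphically but admits no $G$--equivariant splitting, since $\chi(t)$ has no invariant complement to its eigenline when $|t|\neq 1$. So an additional hypothesis is needed --- that the equivariant structure is algebraic in $g$, equivalently extends to an algebraic action of $R$ --- and once it is imposed your Zariski--density (or complete--reducibility) conclusion goes through verbatim. In short: you have identified, but not repaired, a gap that is equally present in the paper's own proof.
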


\begin{proof}
Assume that the short exact sequence in the proposition splits holomorphically. Fix a holomorphic splitting
$$
\psi\ :\ C\ \longrightarrow\ B,
$$
so we have $q\circ\psi \,=\, {\rm Id}_C$, where $q$ is the projection in the proposition. For any $g\, \in\, G$,
it is evident that the homomorphism
$$
\psi_g \ :\ C\ \longrightarrow\ B, \ \ \ v \, \longmapsto\, g^{-1}\cdot \psi (g\cdot v)
$$
(see \eqref{n1}) also satisfies the condition $q\circ\psi_g \,=\, {\rm Id}_C$.

Consider the compact subgroup $K_G\, \subset\, G$ in Assumption \ref{assm}. Construct the homomorphism
$$
\widetilde{\psi} \ :\ C\ \longrightarrow\ B, \ \ \ v \ \longmapsto\ \int_{K_G} \psi_g(v)\mu_{K_G},
$$
where $\mu_{K_G}$ is the Haar measure of $K_G$. It is straightforward to check that $\widetilde{\psi}$ is
$K_G$--equivariant and it is holomorphic. Since the Zariski closure of $K_G\, \subset\, \text{SL}(2,{\mathbb C})$
contains $G$, it follows that $\widetilde{\psi}$ is actually a holomorphic $G$--equivariant
splitting of the exact sequence in the proposition.
\end{proof}

In Lemma \ref{lem1} we saw that \eqref{e6} is a filtration of $G$--equivariant vector bundles.

\begin{corollary}\label{cor1}
The filtration in \eqref{e6} admits a holomorphic $G$--equivariant splitting.
\end{corollary}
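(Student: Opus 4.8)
The plan is to obtain the $G$-equivariant splitting of the Harder--Narasimhan filtration \eqref{e6} by an induction on the length $\ell$, using Proposition \ref{prop1} at each step. The base case $\ell = 1$ is vacuous. For the inductive step, consider the short exact sequence of $G$-equivariant bundles
$$
0\,\longrightarrow\, E_{\ell-1}\,\longrightarrow\, E\,\longrightarrow\, E/E_{\ell-1}\,\longrightarrow\, 0,
$$
which makes sense as a sequence of $G$-equivariant bundles by Lemma \ref{lem1} (the quotient $E/E_{\ell-1}$ inherits a $G$-equivariant structure since $E_{\ell-1}$ is $G$-stable). The first point to check is that this sequence splits holomorphically as a sequence of ordinary vector bundles on $\CP^1$. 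This is where the specific geometry of $\CP^1$ enters: every vector bundle on $\CP^1$ splits as a direct sum of line bundles (Grothendieck), and the successive quotients of the Harder--Narasimhan filtration are semistable, hence themselves direct sums of line bundles of a single fixed slope, with the slopes strictly decreasing down the filtration. In particular $\mathrm{Ext}^1(E/E_{\ell-1},\, E_{\ell-1}) = H^1(\CP^1,\, \mathcal{H}om(E/E_{\ell-1},\, E_{\ell-1}))$ vanishes because $\mathcal{H}om(E/E_{\ell-1},\, E_{\ell-1})$ is a direct sum of line bundles $\mathcal{O}_{\CP^1}(m)$ with $m\geq 0$ — indeed the slope of $E_{\ell-1}$ exceeds that of any quotient of $E/E_{\ell-1}$ so every such $m$ is $\geq 0$, forcing $H^1$ to vanish. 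Hence the sequence splits holomorphically.

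Once holomorphic splitting is established, Proposition \ref{prop1} applies verbatim and yields a holomorphic $G$-equivariant splitting
$$
E\,\cong\, E_{\ell-1}\,\oplus\, (E/E_{\ell-1})
$$
as $G$-equivariant bundles. Now $E_{\ell-1}$ carries the induced $G$-equivariant structure and has Harder--Narasimhan filtration
$$
0\,=\,E_0\,\subset\, E_1\,\subset\,\cdots\,\subset\, E_{\ell-1},
$$
which is again a filtration by $G$-equivariant subbundles by Lemma \ref{lem1} applied to $E_{\ell-1}$. By the inductive hypothesis this filtration splits $G$-equivariantly, giving $E_{\ell-1}\,\cong\,\bigoplus_{i=1}^{\ell-1}(E_i/E_{i-1})$ as $G$-equivariant bundles. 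Combining with the previous display produces the desired $G$-equivariant isomorphism $E\,\cong\,\bigoplus_{i=1}^{\ell}(E_i/E_{i-1})$, which splits \eqref{e6} equivariantly.

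The only genuinely substantive point is the vanishing of the obstruction group $\mathrm{Ext}^1(E/E_{\ell-1},\, E_{\ell-1})$, and I expect that to be the main thing worth spelling out; it rests entirely on the defining property of the Harder--Narasimhan filtration that slopes strictly decrease, together with the fact that on $\CP^1$ a semistable bundle of slope $\mu$ is a direct sum of copies of $\mathcal{O}_{\CP^1}(\mu)$ when $\mu\in\ZZ$, and more generally has all Harder--Narasimhan-type line subbundles of degree $\lceil \mu\rceil$ or nearby — in any case, $\mathcal{H}om$ between a lower-slope quotient and a higher-slope sub is globally generated with no higher cohomology. Everything else is a formal induction feeding into Proposition \ref{prop1}. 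One should also note, to keep the induction clean, that the $G$-equivariant structure on each successive quotient $E_i/E_{i-1}$ is the one induced from that of $E$, so that the final direct sum decomposition is compatible with the original equivariant structure; this is immediate from functoriality of the quotient construction but deserves a one-line remark.
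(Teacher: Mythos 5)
Your proof is correct and follows essentially the same route as the paper: establish that the filtration splits holomorphically on $\CP^1$ via Grothendieck's theorem and the vanishing of $H^1(\CP^1,\mathcal{O}_{\CP^1}(m))$ for $m\ge 0$ (which the paper states for $\mathrm{Hom}(\mathcal{O}_{\CP^1}(a),\mathcal{O}_{\CP^1}(a+b))$ with $b\ge 0$), then invoke Proposition \ref{prop1} to upgrade to a $G$--equivariant splitting. You merely make explicit the induction on the length of the filtration and the slope comparison that the paper leaves implicit.
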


\begin{proof}
All holomorphic vector bundles $W$ on ${\mathbb C}{\mathbb P}^1$ of rank $r$ are of the form
$$
W \ = \ \bigoplus_{i=1}^r {\mathcal O}_{{\mathbb C}{\mathbb P}^1}(d_i)
$$
\cite[p.~122, Th\'eor\`eme 1.1]{Gr}. Also, we have
$$
H^1({\mathbb C}{\mathbb P}^1, \, \text{Hom}({\mathcal O}_{{\mathbb C}{\mathbb P}^1}(a),\,
{\mathcal O}_{{\mathbb C}{\mathbb P}^1}(a+b)))\ =\ 0
$$
for all $b\, \geq\, 0$ and $a\, \in\, {\mathbb Z}$. Consequently, the filtration in \eqref{e6} admits a holomorphic splitting.
Now Proposition \ref{prop1} says that it admits a holomorphic $G$--equivariant splitting.
\end{proof}

Let
\begin{equation}\label{e7}
W \ =\ {\mathcal O}_{{\mathbb C}{\mathbb P}^1}(d)^{\oplus r}
\end{equation}
be a $G$--equivariant vector bundle on ${\mathbb C}{\mathbb P}^1$. As noted before, using the standard
action of $\text{SL}(2,{\mathbb C})$ on ${\mathbb C}^2$ we get a natural $G$--equivariant structure on 
the line bundle ${\mathcal O}_{{\mathbb C}{\mathbb P}^1}(d)$. Let $M$ be a finite dimensional complex
$G$--module. The actions of $G$ on ${\mathcal O}_{{\mathbb C}{\mathbb P}^1}(d)$ and $M$ combine together
to produce an action of $G$ on the holomorphic vector bundle ${\mathcal O}_{{\mathbb C}{\mathbb P}^1}(d)
\bigotimes_{\mathbb C} M$. In other words,
\begin{equation}\label{e8}
{\mathcal O}_{{\mathbb C}{\mathbb P}^1}(d)\otimes_{\mathbb C} M \ \longrightarrow\ {\mathbb C}{\mathbb P}^1
\end{equation}
is a $G$--equivariant vector bundle.

\begin{proposition}\label{prop2}
There is a finite dimensional complex $G$--module $M$ such that the $G$--equivariant vector bundle $W$ in
\eqref{e7} is holomorphically and $G$--equivariantly isomorphic to ${\mathcal O}_{{\mathbb C}{\mathbb P}^1}(d)
\bigotimes_{\mathbb C} M$ in \eqref{e8}.
\end{proposition}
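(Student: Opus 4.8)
The plan is to recover $M$ from $W$ as a space of homomorphisms and then reconstruct $W$ by evaluation. Since $\cO_{\CP^1}(d)$ carries its natural $G$--equivariant structure and $W$ in \eqref{e7} is $G$--equivariant, the sheaf $\text{Hom}(\cO_{\CP^1}(d),\, W)\,=\,\cO_{\CP^1}(-d)\otimes W$ is a $G$--equivariant vector bundle on $\CP^1$. I would set
$$
M\ :=\ H^0\big(\CP^1,\ \text{Hom}(\cO_{\CP^1}(d),\, W)\big),
$$
a finite dimensional complex vector space on which $G$ acts linearly by $(g\cdot\phi)(x)\,=\,g\cdot\phi(\widehat{\rho}(g)^{-1}x)$ (in the notation \eqref{n1}); one checks this gives a homomorphism $G\,\longrightarrow\,\GL(M)$, so that $M$ is a finite dimensional $\CC[G]$--module. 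There is a tautological morphism of holomorphic vector bundles
$$
\theta\ :\ \cO_{\CP^1}(d)\otimes_\CC M\ \longrightarrow\ W,\qquad s\otimes\phi\ \longmapsto\ \phi(s),
$$
and directly from the definitions of the two $G$--actions it is $G$--equivariant: $g\cdot(s\otimes\phi)\,=\,(g\cdot s)\otimes(g\cdot\phi)$ is sent by $\theta$ to $(g\cdot\phi)(g\cdot s)\,=\,g\cdot\phi(s)\,=\,g\cdot\theta(s\otimes\phi)$.

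It then remains to show that $\theta$ is an isomorphism, and for this the $G$--action plays no role, so one may forget it and work with the underlying holomorphic bundles. Using the given isomorphism $W\,\cong\,\cO_{\CP^1}(d)^{\oplus r}$ and $\text{Hom}(\cO_{\CP^1}(d),\,\cO_{\CP^1}(d))\,=\,\cO_{\CP^1}$ --- equivalently $H^0(\CP^1,\,\cO_{\CP^1})\,=\,\CC$ --- one gets $\text{Hom}(\cO_{\CP^1}(d),\, W)\,\cong\,\cO_{\CP^1}^{\oplus r}$, hence $\dim_\CC M\,=\,r$; and $\theta$ is the twist by $\cO_{\CP^1}(d)$ of the evaluation map $\cO_{\CP^1}\otimes_\CC H^0(\CP^1,\,\cF)\,\longrightarrow\,\cF$ for the trivial bundle $\cF\,:=\,\cO_{\CP^1}(-d)\otimes W\,\cong\,\cO_{\CP^1}^{\oplus r}$, which is an isomorphism because $\cF$ is trivial. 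Thus $\theta$ is a holomorphic $G$--equivariant isomorphism, and comparing with \eqref{e8} completes the proof.

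There is no serious obstacle here; the one point needing a little care is the bookkeeping for the induced action on $M\,=\,H^0$ --- that pullback of global sections along $\widehat{\rho}(g)$ combined with the fibrewise action of $g$ on $W$ is well defined, holomorphic, linear, and multiplicative in $g$. (If one wants $M$ to be a module over $G$ as a topological, or even algebraic, group, this follows too: the representation $G\,\longrightarrow\,\GL(M)$ is continuous, and by Assumption \ref{assm} it is determined by its restriction to the Zariski-dense subgroup $K_G$, hence algebraic; this is not needed for the statement as written.) The real content of the proposition is just the elementary fact that the holomorphic bundle $\cO_{\CP^1}(d)^{\oplus r}$ is recovered from the multiplicity space of its maps out of the line bundle $\cO_{\CP^1}(d)$, a space that automatically inherits the required $G$--representation from the ambient equivariant structures.
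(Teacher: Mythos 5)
Your proof is correct and follows essentially the same route as the paper: define $M$ as $H^0({\mathbb C}{\mathbb P}^1,\,\mathrm{Hom}({\mathcal O}_{{\mathbb C}{\mathbb P}^1}(d),\,W))$ with its induced $G$--action and show the evaluation map is a holomorphic $G$--equivariant isomorphism. Your write-up merely supplies more detail than the paper (the explicit formula for the action on $M$, the equivariance check, and the reduction of the isomorphism claim to the evaluation map for a trivial bundle), all of which is accurate.
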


\begin{proof}
Consider the complex vector space
$$
M\ :=\ H^0({\mathbb C}{\mathbb P}^1, \, \text{Hom}({\mathcal O}_{{\mathbb C}{\mathbb P}^1}(d),\, W)).
$$
The actions of $G$ on ${\mathcal O}_{{\mathbb C}{\mathbb P}^1}(d)$ and $W$ together produce an action of
$G$ on $M$. Note that there is the natural evaluation homomorphism
$$
\Psi\ :\ {\mathcal O}_{{\mathbb C}{\mathbb P}^1}(d)\otimes_{\mathbb C} M\ \longrightarrow\, W
$$
that sends any $w\otimes m$, where $w\, \in\, {\mathcal O}_{{\mathbb C}{\mathbb P}^1}(d)_x$,
$x\, \in\, {\mathbb C}{\mathbb P}^1$, and $m\, \in\, M$, to $m(w)\, \in\, W_x$.

{}From the construction of the action of $G$ on $M$ it follows immediately that $\Psi$ is
$G$--equivariant. Also, the homomorphism $\Psi$ is evidently holomorphic. Since $W$ is a direct sum of
copies of ${\mathcal O}_{{\mathbb C}{\mathbb P}^1}(d)$, we conclude that $\Psi$ is an isomorphism.
\end{proof}

\begin{theorem}\label{thm1}
Any $G$--equivariant vector bundle $W$ on ${\mathbb C}{\mathbb P}^1$ is holomorphically and $G$--equivariantly isomorphic
to a $G$--equivariant vector bundle of the form
$$
\bigoplus_{i=1}^r {\mathcal O}_{{\mathbb C}{\mathbb P}^1}(d_i)\otimes_{\mathbb C} M_i,
$$
where each $M_i$ is a finite dimensional complex $G$--module.

Moreover, the collection $\{d_i,\, M_i\}_{i=1}^r$ is uniquely determined by the isomorphism class of the
$G$--equivariant vector bundle $W$.
\end{theorem}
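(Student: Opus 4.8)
\textit{Existence.} The plan is to split $W$ along its Harder--Narasimhan filtration and then to recognize each graded piece via Proposition \ref{prop2}. First I would use Lemma \ref{lem1} to record that the Harder--Narasimhan filtration \eqref{e6} consists of $G$-equivariant subbundles, and then Corollary \ref{cor1} to obtain a $G$-equivariant isomorphism $W \cong \bigoplus_{i=1}^{\ell} Q_i$ with $Q_i = E_i/E_{i-1}$; each $Q_i$ is a $G$-equivariant semistable bundle and the slopes $\mu(Q_i)$ strictly decrease with $i$. Next I would invoke the standard fact --- immediate from Grothendieck's splitting theorem \cite[p.~122, Th\'eor\`eme 1.1]{Gr} --- that a semistable bundle on $\CP^1$ is isomorphic to $\cO_{\CP^1}(d)^{\oplus r}$ for a single integer $d$ (if two of the Grothendieck integers differed, the subbundle generated by the summands of largest degree would destabilize). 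Hence $\mu(Q_i) = d_i \in \ZZ$, the bundle $Q_i$ has the shape \eqref{e7}, and Proposition \ref{prop2} supplies a finite dimensional complex $G$-module $M_i$ with $Q_i \cong \cO_{\CP^1}(d_i) \otimes_{\CC} M_i$ as $G$-equivariant bundles. Taking the direct sum over $i$ gives the required presentation, moreover with the $d_i$ pairwise distinct and $d_1 > \cdots > d_\ell$.

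\textit{Uniqueness.} After grouping summands with a common value of $d_i$ I may assume the integers $d_i$ are pairwise distinct and ordered $d_1 > \cdots > d_r$; the claim to prove is that $r$, the $d_i$, and the isomorphism classes of the $M_i$ are determined by the $G$-equivariant isomorphism class of $W$. The key observation is that for $W = \bigoplus_{i=1}^{r} \cO_{\CP^1}(d_i)\otimes_{\CC} M_i$ the partial sums $F_k := \bigoplus_{i \le k} \cO_{\CP^1}(d_i)\otimes_{\CC} M_i$ \emph{are} the Harder--Narasimhan filtration of $W$, since each graded piece $F_k/F_{k-1} = \cO_{\CP^1}(d_k)\otimes_{\CC} M_k$ is semistable of slope $d_k$ and these slopes strictly decrease. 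By the uniqueness of the Harder--Narasimhan filtration \cite[p.~16, Theorem 1.3.4]{HL}, any holomorphic isomorphism between two bundles so presented --- in particular any $G$-equivariant one --- must carry $F_\bullet$ onto the corresponding filtration of the target, hence match up the lengths and the integers $d_i$ and induce isomorphisms on graded pieces; since the filtrations are $G$-stable, these graded isomorphisms $\cO_{\CP^1}(d_i)\otimes_{\CC} M_i \cong \cO_{\CP^1}(d_i)\otimes_{\CC} N_i$ are $G$-equivariant.

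\textit{Recovering the modules.} It then remains to deduce $M \cong N$ as $G$-modules from a $G$-equivariant isomorphism $\cO_{\CP^1}(d)\otimes_{\CC} M \cong \cO_{\CP^1}(d)\otimes_{\CC} N$. For this I would apply the functor $H^0(\CP^1,\, \text{Hom}(\cO_{\CP^1}(d),\, -))$ from $G$-equivariant bundles to $G$-modules, exactly as in the proof of Proposition \ref{prop2}, and then note that $\text{Hom}(\cO_{\CP^1}(d),\, \cO_{\CP^1}(d)\otimes_{\CC} M) \cong \cO_{\CP^1}(d)^{\vee}\otimes \cO_{\CP^1}(d)\otimes_{\CC} M$ is $G$-equivariantly isomorphic to $\cO_{\CP^1}\otimes_{\CC} M$, because the identity endomorphism is a $G$-invariant nowhere-vanishing section of the $G$-equivariant line bundle $\cO_{\CP^1}(d)^{\vee}\otimes\cO_{\CP^1}(d)$; taking $H^0$ returns $M$ with its original $G$-action, and likewise $N$, so $M \cong N$.

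\textit{Main obstacle.} The existence half is a routine assembly of Lemma \ref{lem1}, Corollary \ref{cor1}, Proposition \ref{prop2}, and the classification of semistable bundles on $\CP^1$. The genuinely delicate point is the uniqueness of the $G$-module data: one must be explicit that uniqueness holds only after collecting the summands with equal $d_i$, and one must check that $H^0$ of the relevant $\text{Hom}$-bundle really returns $M_i$ \emph{as a $G$-module}, which is where the canonical $G$-equivariant triviality of $\cO_{\CP^1}(d_i)^{\vee}\otimes\cO_{\CP^1}(d_i)$ enters. A minor point to be careful about along the way is that a $G$-equivariant (indeed any holomorphic) isomorphism automatically respects the Harder--Narasimhan filtration, which is simply its uniqueness as a filtration of the underlying holomorphic bundle.
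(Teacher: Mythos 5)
Your proposal is correct and follows essentially the same route as the paper: existence via Lemma \ref{lem1}, Corollary \ref{cor1} and Proposition \ref{prop2}, and uniqueness by recognizing the partial sums as the Harder--Narasimhan filtration, reading off the $d_i$ as the slopes of its graded pieces, and recovering each $M_i$ as $H^0(\CP^1,\,\mathrm{Hom}(\cO_{\CP^1}(d_i),\,\cdot\,))$ of the corresponding quotient. You merely spell out two details the paper leaves implicit (that the $d_i$ must first be made pairwise distinct, and that the $\mathrm{Hom}$-bundle is $G$-equivariantly trivial tensor $M_i$), which is a harmless and reasonable elaboration.
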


\begin{proof}
{}From Corollary \ref{cor1} and Proposition \ref{prop2} we conclude that there is a positive integer $r$,
integers $\{d_i\}_{i=1}^r$ and finite dimensional complex $G$--modules $\{M_i\}_{i=1}^r$, such that $W$ is holomorphically
and $G$--equivariantly isomorphic to the $G$--equivariant vector bundle
$\bigoplus_{i=1}^r {\mathcal O}_{{\mathbb C}{\mathbb P}^1}(d_i)\bigotimes_{\mathbb C} M_i$.

To see the uniqueness, take any $G$--equivariant vector bundle
$$
{\mathcal W}\ :=\ \bigoplus_{i=1}^s {\mathcal O}_{{\mathbb C}{\mathbb P}^1}(\delta_i)\otimes_{\mathbb C} N_i,
$$
where each $N_i$ is a finite dimensional complex $G$--module. Note that $s$ is the length of the Harder--Narasimhan
filtration of $\mathcal W$. The integers $\delta_i$ are the slopes of the successive quotients of the
Harder--Narasimhan filtration of $\mathcal W$. The $G$--module $N_i$ is identified with the space of global homomorphisms
from ${\mathcal O}_{{\mathbb C}{\mathbb P}^1}(\delta_i)$ to the $i$--th successive quotient of the
Harder--Narasimhan filtration of $\mathcal W$. From these the uniqueness statement follows immediately.
\end{proof}

\section{Equivariant for a subgroup of $\text{PGL}(2,{\mathbb C})$}

Let $$H\, \subset\, \text{PGL}(2,{\mathbb C})$$ be a closed topological subgroup. As before,
$H$ need not be connected or be a complex subgroup.

\begin{assumption}\label{assm2}
We assume that there is a compact subgroup $K_H\, \subset\, H$ such that Zariski closure of $K_H$ in
${\rm PGL}(2,{\mathbb C})$ contains $H$.
\end{assumption}

An $H$--\textit{equivariant} vector bundle is a holomorphic vector bundle $p\, :\, E\, \longrightarrow\, {\mathbb C}{\mathbb P}^1$
equipped with action
\begin{equation}\label{e9}
\varphi\ :\ H\times E\ \longrightarrow\ E
\end{equation}
such that
\begin{enumerate}
\item $p\circ \varphi (g,\, v)\,=\, \sigma(g,\, p(v))$ for all $(g,\, v)\, \in\, H\times E$, where $\sigma$ is the map in
\eqref{e4}, and

\item the map $E\, \longrightarrow\, E$ defined by $v\,\longmapsto\, \varphi (g,\, v)$ is a holomorphic isomorphism of vector bundles
over the automorphism $\widehat{\sigma}(g)$, for all $g\, \in\, H$, where $\widehat{\sigma}$ is the homomorphism in \eqref{e3}.
\end{enumerate}

The standard action of $\text{SL}(2,{\mathbb C})$ on ${\mathbb C}^2$ produces an action of
$\text{PGL}(2,{\mathbb C})$ on ${\mathcal O}_{{\mathbb C}{\mathbb P}^1}(2)$. This action of
$\text{PGL}(2,{\mathbb C})$ on ${\mathcal O}_{{\mathbb C}{\mathbb P}^1}(2)$ produces an
action of $\text{PGL}(2,{\mathbb C})$ on ${\mathcal O}_{{\mathbb C}{\mathbb P}^1}(2j)$ for
all $j\, \in\, {\mathbb Z}$. Thus the line bundle ${\mathcal O}_{{\mathbb C}{\mathbb P}^1}(2j)$
is $H$--equivariant in a natural way.

But the action of $\text{PGL}(2,{\mathbb C})$ on ${\mathbb C}{\mathbb P}^1$
does not lift to ${\mathcal O}_{{\mathbb C}{\mathbb P}^1}(1)$. Let
\begin{equation}\label{eb}
\beta\ :\ \text{SL}(2,{\mathbb C})\ \longrightarrow\ \text{PGL}(2,{\mathbb C})\ =\ \text{SL}(2,{\mathbb C})/\{\pm I\}
\end{equation}
be the quotient homomorphisms. We have a short exact sequence of groups
\begin{equation}\label{e10}
0\, \longrightarrow\, {\mathbb Z}/2{\mathbb Z}\, \longrightarrow\, {\mathcal H}\ :=\ \beta^{-1}(H)\, \stackrel{\beta'}{\longrightarrow}
\, H \, \longrightarrow\, 0,
\end{equation}
where $\beta'$ is the restriction of $\beta$ (see \eqref{eb}) to $\beta^{-1}(H)$. Since the kernel of $\beta'$ is in
the center of $\mathcal H$, the exact sequence in \eqref{e10} is left-split if there is a homomorphism
$$
\gamma\ :\ H\ \longrightarrow\ {\mathcal H}
$$
such that $\beta'\circ\gamma\,=\, {\rm Id}_H$.

\begin{lemma}\label{lem3}
The line bundle ${\mathcal O}_{{\mathbb C}{\mathbb P}^1}(1)$ has an $H$--equivariant structure if and only if
the exact sequence in \eqref{e10} splits.
\end{lemma}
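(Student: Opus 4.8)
The plan is to prove the two implications separately: from a splitting of \eqref{e10} to construct an $H$--equivariant structure on $\mathcal{O}_{\mathbb{CP}^1}(1)$, and conversely to extract a splitting of \eqref{e10} from such a structure. The first direction is essentially formal; the second is where the work lies, and it rests on a precise description of the group of holomorphic automorphisms of the line bundle $\mathcal{O}_{\mathbb{CP}^1}(1)$.

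For the ``if'' direction, suppose \eqref{e10} splits, say via a continuous homomorphism $\gamma\colon H\to\mathcal H\subset\mathrm{SL}(2,\mathbb{C})$ with $\beta'\circ\gamma=\mathrm{Id}_H$. The bundle $\mathcal{O}_{\mathbb{CP}^1}(1)$ already carries the canonical $\mathrm{SL}(2,\mathbb{C})$--equivariant structure coming from the standard action on $\mathbb{C}^2$; precomposing that action with $\gamma$ produces a continuous action of $H$ on $\mathcal{O}_{\mathbb{CP}^1}(1)$. Since $\widehat\rho=\widehat\sigma\circ\beta$ and $\beta\circ\gamma$ is the inclusion $H\hookrightarrow\mathrm{PGL}(2,\mathbb{C})$ (which is exactly what $\beta'\circ\gamma=\mathrm{Id}_H$ records), the automorphism of $\mathbb{CP}^1$ covered by the action of $g\in H$ is $\widehat\sigma(g)$; hence both axioms of an $H$--equivariant structure hold, and continuity is immediate since only continuous maps have been composed. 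So $\mathcal{O}_{\mathbb{CP}^1}(1)$ is $H$--equivariant.

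For the ``only if'' direction, let $\varphi$ be an $H$--equivariant structure on $\mathcal{O}_{\mathbb{CP}^1}(1)$. Then $g\mapsto\varphi(g,\cdot)$ is a continuous homomorphism from $H$ into the group of holomorphic automorphisms of the line bundle $\mathcal{O}_{\mathbb{CP}^1}(1)$ that cover automorphisms of $\mathbb{CP}^1$, and its composition with the projection onto $\mathrm{Aut}(\mathbb{CP}^1)=\mathrm{PGL}(2,\mathbb{C})$ is the inclusion of $H$. Passing to the two--dimensional $H$--module $V:=H^0(\mathbb{CP}^1,\mathcal{O}_{\mathbb{CP}^1}(1))$ and using the evaluation map, this amounts to a continuous homomorphism $H\to\mathrm{GL}(V)\cong\mathrm{GL}(2,\mathbb{C})$ lifting the inclusion $H\hookrightarrow\mathrm{PGL}(2,\mathbb{C})$. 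It then remains to normalize this homomorphism so that it takes values in $\mathrm{SL}(2,\mathbb{C})$; once it does, it automatically lands in $\mathcal H=\beta^{-1}(H)$, and since it lifts the inclusion of $H$ it is a section $\gamma$ of \eqref{e10} with $\beta'\circ\gamma=\mathrm{Id}_H$, as desired.

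The main obstacle is exactly this normalization. The homomorphism $H\to\mathrm{GL}(2,\mathbb{C})$ produced by $\varphi$ has some determinant character $\chi\colon H\to\mathbb{C}^{\ast}$, and pushing it into $\mathrm{SL}(2,\mathbb{C})$ amounts to twisting by a character $\psi$ of $H$ with $\psi^{2}=\chi^{-1}$; the heart of the argument is to produce such a square root, equivalently to show that the existence of an $H$--equivariant structure on $\mathcal{O}_{\mathbb{CP}^1}(1)$ forces the pullback to $H$ of the extension class of \eqref{e10} to be trivial. Granting this, the rest — continuity of $\gamma$, and the verification that $\gamma$ is a genuine section — is routine.
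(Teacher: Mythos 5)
Your ``if'' direction coincides with the paper's and is fine. The issue is the ``only if'' direction: you correctly reduce it to lifting the homomorphism $H\to \GL(V)\cong \GL(2,\CC)$, where $V=H^0(\CP^1,\cO_{\CP^1}(1))$, to $\SL(2,\CC)$, i.e.\ to producing a character $\psi$ of $H$ with $\psi^2=\chi^{-1}$ where $\chi$ is the determinant character --- and then you stop, writing ``granting this''. That step is not a routine normalization that can be deferred; it is the entire content of the implication, and in general it fails. Take $H=\{1,h\}\cong\ZZ/2\ZZ$ with $h$ the class of $\mathrm{diag}(1,-1)$. The subgroup $\{I,\,\mathrm{diag}(1,-1)\}\subset\GL(2,\CC)$ lifts $H$ and hence gives an $H$--equivariant structure on $\cO_{\CP^1}(1)$ (equivalently, $\cO_{\CP^1}(1)\cong\cO_{\CP^1}([1:0])$ and $[1:0]$ is a fixed point of $h$). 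But the two lifts of $h$ to $\SL(2,\CC)$ are $\pm\,\mathrm{diag}(i,-i)$, both of order $4$, so ${\mathcal H}\cong\ZZ/4\ZZ$ and \eqref{e10} does not split. Here $\chi$ is the nontrivial character of $\ZZ/2\ZZ$, which has no square root, which is exactly the obstruction you isolated and did not resolve.

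So the gap in your argument is genuine and, moreover, unfixable: the lemma as stated is false. It is worth noting that the paper's own proof of this direction goes through only because it asserts that $\textbf{Aut}({\mathcal O}_{{\mathbb C}{\mathbb P}^1}(1))$ (bundle automorphisms covering automorphisms of $\CP^1$) ``coincides with $\SL(2,\CC)$''; that group is in fact $\GL(2,\CC)$ --- it is an extension of $\PGL(2,\CC)$ by the scalars $\CC^{*}=H^0(\CP^1,{\mathcal O}^{*}_{\CP^1})$, not by $\{\pm I\}$ --- and your analysis, by using the correct automorphism group, exposes the missing (and in general nonexistent) step. The correct equivalence is: $\cO_{\CP^1}(1)$ admits an $H$--equivariant structure if and only if the inclusion $H\hookrightarrow\PGL(2,\CC)$ lifts to $\GL(2,\CC)$, a strictly weaker condition than the splitting of \eqref{e10}.
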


\begin{proof}
Let $\gamma\, :\, H\, \longrightarrow\, {\mathcal H}$ be a splitting of \eqref{e10}. Consider the action of
$\text{SL}(2,{\mathbb C})$ on ${\mathcal O}_{{\mathbb C}{\mathbb P}^1}(1)$. Restrict it to the subgroup
$\gamma(H)\, \subset\, \text{SL}(2,{\mathbb C})$. This makes ${\mathcal O}_{{\mathbb C}{\mathbb P}^1}(1)$
an $H$--equivariant line bundle.

To prove the converse, assume that ${\mathcal O}_{{\mathbb C}{\mathbb P}^1}(1)$ has the structure of an
$H$--equivariant line bundle. This $H$--equivariance structure produces a homomorphism
\begin{equation}\label{e11}
\alpha\ :\ H\ \longrightarrow\ \textbf{Aut}({\mathcal O}_{{\mathbb C}{\mathbb P}^1}(1)),
\end{equation}
where $\textbf{Aut}({\mathcal O}_{{\mathbb C}{\mathbb P}^1}(1))$ is the group of holomorphic automorphisms of
${\mathcal O}_{{\mathbb C}{\mathbb P}^1}(1)$ over the holomorphic automorphisms of ${\mathbb C}{\mathbb P}^1$.
Now, $\textbf{Aut}({\mathcal O}_{{\mathbb C}{\mathbb P}^1}(1))$ coincides with $\text{SL}(2,{\mathbb C})$.
Using this identification of $\textbf{Aut}({\mathcal O}_{{\mathbb C}{\mathbb P}^1}(1))$ with $\text{SL}(2,{\mathbb C})$,
the homomorphism $\alpha$ in \eqref{e11} gives a homomorphism
$$
\alpha'\ :\ H\ \longrightarrow\ \text{SL}(2,{\mathbb C}).
$$
It is evident that $\beta(\alpha'(H))\,=\, H$, where $\beta$ is the projection in \eqref{eb}, and, moreover,
$\alpha'$ is a splitting of the exact sequence in \eqref{e10}.
\end{proof}

\begin{corollary}\label{cor2}
Take any $d\, \in\, \mathbb Z$. The line bundle ${\mathcal O}_{{\mathbb C}{\mathbb P}^1}(2d+1)$ has an $H$--equivariant
structure if and only if the exact sequence in \eqref{e10} splits.
\end{corollary}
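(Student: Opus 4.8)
The plan is to deduce both implications of Corollary \ref{cor2} from Lemma \ref{lem3} by twisting with an even line bundle. The relevant facts, all already available, are that $\cO_{\CP^1}(2j)$ carries a natural $H$--equivariant structure for every $j\,\in\,\ZZ$, and that the tensor product of two $H$--equivariant line bundles (each acting over the automorphism $\widehat{\sigma}(g)$, as in \eqref{e9}) is again an $H$--equivariant line bundle.

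First I would treat the ``if'' direction. Assume the exact sequence \eqref{e10} splits. Then Lemma \ref{lem3} provides an $H$--equivariant structure on $\cO_{\CP^1}(1)$. Tensoring it with the natural $H$--equivariant structure on $\cO_{\CP^1}(2d)$ yields an $H$--equivariant structure on $\cO_{\CP^1}(1)\otimes_{\cO_{\CP^1}}\cO_{\CP^1}(2d)\,\cong\,\cO_{\CP^1}(2d+1)$.

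For the converse, assume $\cO_{\CP^1}(2d+1)$ admits an $H$--equivariant structure. Tensoring it with the natural $H$--equivariant structure on $\cO_{\CP^1}(-2d)\,=\,\cO_{\CP^1}(2)^{\otimes(-d)}$ produces an $H$--equivariant structure on $\cO_{\CP^1}(2d+1)\otimes_{\cO_{\CP^1}}\cO_{\CP^1}(-2d)\,\cong\,\cO_{\CP^1}(1)$, so Lemma \ref{lem3} shows that \eqref{e10} splits.

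I do not expect any genuine obstacle: the entire content is carried by Lemma \ref{lem3}, and Corollary \ref{cor2} is a formal consequence of the observation that isomorphism classes of $H$--equivariant line bundles form a group under tensor product in which all even twists $\cO_{\CP^1}(2j)$ lie. The only routine verification is that tensoring two $H$--equivariant structures again satisfies the two conditions in \eqref{e9}, which is immediate since both structures cover the same automorphism $\widehat{\sigma}(g)$ of $\CP^1$.
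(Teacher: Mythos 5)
Your argument is correct and is essentially the paper's own proof: the paper likewise reduces to $\mathcal{O}_{\mathbb{CP}^1}(1)$ by twisting with the naturally $H$--equivariant even line bundles and then invokes Lemma \ref{lem3}. You simply spell out the tensoring step in slightly more detail than the paper does.
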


\begin{proof}
It was noted that ${\mathcal O}_{{\mathbb C}{\mathbb P}^1}(2)$ has an $H$--equivariant
structure. Therefore, ${\mathcal O}_{{\mathbb C}{\mathbb P}^1}(2d+1)$ has an $H$--equivariant
structure if and only if ${\mathcal O}_{{\mathbb C}{\mathbb P}^1}(1)$ has an $H$--equivariant
structure. Now the result follows from Lemma \ref{lem3}.
\end{proof}

Take an $H$--equivariant vector bundle $p\, :\, E\, \longrightarrow\, {\mathbb C}{\mathbb P}^1$. Let
\begin{equation}\label{e6b}
0 \,=\, E_0\,\subset \, E_1\,\subset\, \cdots \, \subset\, E_{\ell-1}\, \subset\, E_\ell\,=\, E
\end{equation}
be the Harder--Narasimhan filtration of $E$.

\begin{proposition}\label{prop3}\mbox{}
\begin{enumerate}
\item The action of $H$ on $E$ (see \eqref{e9}) preserves the filtration in \eqref{e6b}.

\item The filtration in \eqref{e6b} admits a holomorphic $H$--equivariant splitting.
\end{enumerate}
\end{proposition}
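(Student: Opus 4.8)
The plan is to mimic the arguments already carried out for subgroups of $\mathrm{SL}(2,\mathbb{C})$ in Section~2, replacing the $\mathrm{SL}(2,\mathbb{C})$-action by the $\mathrm{PGL}(2,\mathbb{C})$-action $\sigma$ of \eqref{e4} and using Assumption~\ref{assm2} in place of Assumption~\ref{assm}. For part~(1), I would repeat verbatim the reasoning of Lemma~\ref{lem1}: for each $g\in H$ the image $\varphi(g,-)$ of the filtration \eqref{e6b} is again a filtration by subbundles whose successive quotients are semistable with strictly decreasing slopes, because $\varphi(g,-)$ is an isomorphism of vector bundles covering the automorphism $\widehat\sigma(g)$ of $\mathbb{CP}^1$ and slope and semistability are preserved under pullback by an automorphism. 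Hence $\varphi(g,-)$ applied to \eqref{e6b} satisfies all the defining properties of the Harder--Narasimhan filtration of $E$, and by the uniqueness of that filtration (\cite[p.~16, Theorem 1.3.4]{HL}) it must coincide with \eqref{e6b}; therefore the $H$-action preserves each $E_i$.

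For part~(2), once part~(1) is known, each $E_i$ is an $H$-equivariant subbundle, so each short exact sequence
$$
0\,\longrightarrow\, E_{i-1}\,\longrightarrow\, E_i\,\longrightarrow\, E_i/E_{i-1}\,\longrightarrow\, 0
$$
is a short exact sequence of $H$-equivariant bundles on $\mathbb{CP}^1$. By Grothendieck's splitting theorem each bundle on $\mathbb{CP}^1$ is a sum of line bundles, and the vanishing of $H^1(\mathbb{CP}^1,\mathrm{Hom}(\mathcal O(a),\mathcal O(a+b)))$ for $b\ge 0$ (exactly as in the proof of Corollary~\ref{cor1}) shows that the filtration \eqref{e6b} splits holomorphically. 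It then suffices to have the $H$-equivariant analogue of Proposition~\ref{prop1}, namely that a holomorphically split short exact sequence of $H$-equivariant bundles admits an $H$-equivariant splitting; this is proved by the same averaging argument, fixing a holomorphic splitting $\psi$, forming $\psi_g(v)=g^{-1}\cdot\psi(g\cdot v)$, integrating over the compact subgroup $K_H$ of Assumption~\ref{assm2} against its Haar measure to get a $K_H$-equivariant holomorphic splitting $\widetilde\psi$, and then invoking that the Zariski closure of $K_H$ in $\mathrm{PGL}(2,\mathbb{C})$ contains $H$ to upgrade $K_H$-equivariance to $H$-equivariance (the set of $g$ for which $\psi_g=\widetilde\psi$, equivalently $g\cdot\widetilde\psi=\widetilde\psi\cdot g$, is Zariski closed and contains $K_H$). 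Applying this to each stage of the filtration and assembling the splittings yields the $H$-equivariant splitting of \eqref{e6b}.

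The only genuinely new point compared with Section~2 is that the ambient group is now $\mathrm{PGL}(2,\mathbb{C})$ rather than $\mathrm{SL}(2,\mathbb{C})$, but nothing in the two arguments used the specific group: Lemma~\ref{lem1} used only that the $H$-action is by bundle automorphisms covering automorphisms of the base, and Proposition~\ref{prop1} used only the existence of a compact subgroup whose Zariski closure contains $H$, which is precisely Assumption~\ref{assm2}. So I do not anticipate a real obstacle; the mildest care needed is in the Zariski-density step, where one should phrase the condition $\psi_g=\widetilde\psi$ as the vanishing of a morphism of varieties $H\to \mathrm{Hom}(C,B)$ (or check equality of two algebraic actions) so that its zero locus is honestly Zariski closed, and then conclude from $K_H\subseteq$ this locus and density that all of $H$ lies in it. In short, Proposition~\ref{prop3} follows from the $\mathrm{PGL}$-versions of Lemma~\ref{lem1} and Corollary~\ref{cor1}, which in turn follow word for word from their $\mathrm{SL}$-counterparts with Assumption~\ref{assm} replaced by Assumption~\ref{assm2}.
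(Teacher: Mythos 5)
Your proposal is correct and matches the paper exactly: the authors simply state that the proof is identical to those of Lemma \ref{lem1} and Corollary \ref{cor1} (together with the averaging argument of Proposition \ref{prop1}), with Assumption \ref{assm2} replacing Assumption \ref{assm}, which is precisely what you carry out. You in fact supply more detail than the paper does, including the correct care about the Zariski-density step.
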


\begin{proof}
The proof is identical to the proofs of Lemma \ref{lem1} and Corollary \ref{cor1}.
\end{proof}

Next we will construct some $H$--equivariant vector bundles on ${\mathbb C}{\mathbb P}^1$.

First assume that the exact sequence in \eqref{e10} splits.
Let $$\gamma\ :\ H\ \longrightarrow\ {\mathcal H}\ \subset\ \text{SL}(2,{\mathbb C})$$ be a splitting of \eqref{e10};
so $\beta'\circ\gamma\,=\, {\rm Id}_H$, where $\beta'$ is the homomorphism in \eqref{e10}.
Using $\gamma$, the line bundle ${\mathcal O}_{{\mathbb C}{\mathbb P}^1}(d)$ has a natural $H$--equivariance structure
for every $d\, \in\, \mathbb Z$. Let $M$ be a finite dimensional complex $H$--module. The actions of $H$ on
${\mathcal O}_{{\mathbb C}{\mathbb P}^1}(d)$ and $M$ together produce an action of $H$ on the holomorphic vector bundle
${\mathcal O}_{{\mathbb C}{\mathbb P}^1}(d)\bigotimes_{\mathbb C} M$. Thus ${\mathcal O}_{{\mathbb C}{\mathbb P}^1}(d)
\bigotimes_{\mathbb C} M$ has a natural $H$--equivariance structure.

Next assume that the exact sequence in \eqref{e10} \textit{does not} split. Then, as before, the holomorphic vector bundle
${\mathcal O}_{{\mathbb C}{\mathbb P}^1}(2d) \bigotimes_{\mathbb C} M$ has a natural $H$--equivariance structure
for every finite dimensional complex $H$--module $M$ and every $d\, \in\, \mathbb Z$.

Take any $d\, \in\, {\mathbb Z}$. Consider the natural action of ${\mathcal H}\, \subset\, \text{SL}(2,{\mathbb C})$
(the group in \eqref{e10}) on the line bundle ${\mathcal O}_{{\mathbb C}{\mathbb P}^1}(2d+1)$. It can be shown that the
nontrivial element of ${\mathbb Z}/2{\mathbb Z}\, \subset\, \mathcal H$ (see \eqref{e10}) acts as multiplication by
$-1$ on ${\mathcal O}_{{\mathbb C}{\mathbb P}^1}(2d+1)$. Indeed, if the nontrivial element of ${\mathbb Z}/2{\mathbb Z}$
acts as multiplication by $1$ on ${\mathcal O}_{{\mathbb C}{\mathbb P}^1}(2d+1)$, then the action of $\mathcal H\, \subset\,
\text{SL}(2,{\mathbb C})$ on ${\mathcal O}_{{\mathbb C}{\mathbb P}^1}(2d+1)$ produces an action of $H$
on ${\mathcal O}_{{\mathbb C}{\mathbb P}^1}(2d+1)$. On the other hand, since 
the exact sequence in \eqref{e10} does not split, Corollary \ref{cor2} says that ${\mathcal O}_{{\mathbb C}{\mathbb P}^1}(2d+1)$
does not admit any $H$--equivariance structure. Therefore, we conclude that the nontrivial element of
${\mathbb Z}/2{\mathbb Z}\, \subset\, \mathcal H$ acts on ${\mathcal O}_{{\mathbb C}{\mathbb P}^1}(2d+1)$ as multiplication by $-1$.

Now, take any finite dimensional complex $\mathcal H$--module $M$ satisfying the condition that the nontrivial element
of ${\mathbb Z}/2{\mathbb Z}\, \subset\, \mathcal H$ acts on $M$ as multiplication by $-1$. Consider the holomorphic vector
bundle ${\mathcal O}_{{\mathbb C}{\mathbb P}^1}(2d+1)\bigotimes_{\mathbb C} M$. The actions of $\mathcal H$ on
${\mathcal O}_{{\mathbb C}{\mathbb P}^1}(2d+1)$ and $M$ together produce an action of $\mathcal H$ on
${\mathcal O}_{{\mathbb C}{\mathbb P}^1}(2d+1)\bigotimes_{\mathbb C} M$. Note that the nontrivial
element of ${\mathbb Z}/2{\mathbb Z}\, \subset\, \mathcal H$ acts on both ${\mathcal O}_{{\mathbb C}{\mathbb P}^1}(2d+1)$
and $M$ as multiplication by $-1$. Thus the action of the subgroup ${\mathbb Z}/2{\mathbb Z}\, \subset\, \mathcal H$ on
${\mathcal O}_{{\mathbb C}{\mathbb P}^1}(2d+1)\bigotimes_{\mathbb C} M$ is the trivial one. Consequently, the
action of $\mathcal H$ on ${\mathcal O}_{{\mathbb C}{\mathbb P}^1}(2d+1)\bigotimes_{\mathbb C} M$ produces an action,
on ${\mathcal O}_{{\mathbb C}{\mathbb P}^1}(2d+1)\bigotimes_{\mathbb C} M$, of the quotient group $H$ in \eqref{e10}.

\begin{proposition}\label{prop4}\mbox{}
\begin{enumerate}
\item Assume that the exact sequence in \eqref{e10} splits. For any $d\, \in\, \mathbb Z$, let
$$
W \ =\ {\mathcal O}_{{\mathbb C}{\mathbb P}^1}(d)^{\oplus r}
$$
be an $H$--equivariant vector bundle on ${\mathbb C}{\mathbb P}^1$. Then
there is a finite dimensional complex $H$--module $M$ such that the $H$--equivariant vector bundle $W$
is holomorphically and $G$--equivariantly isomorphic to ${\mathcal O}_{{\mathbb C}{\mathbb P}^1}(d)
\bigotimes_{\mathbb C} M$.

\item Assume that the exact sequence in \eqref{e10} does not split. For any $d\, \in\, \mathbb Z$, let
$$
W \ =\ {\mathcal O}_{{\mathbb C}{\mathbb P}^1}(2d)^{\oplus r}
$$
be an $H$--equivariant vector bundle on ${\mathbb C}{\mathbb P}^1$. Then
there is a finite dimensional complex $H$--module $M$ such that the $H$--equivariant vector bundle $W$
is holomorphically and $G$--equivariantly isomorphic to ${\mathcal O}_{{\mathbb C}{\mathbb P}^1}(2d)
\bigotimes_{\mathbb C} M$.

\item Assume that the exact sequence in \eqref{e10} does not split. For any $d\, \in\, \mathbb Z$, let
$$
W \ =\ {\mathcal O}_{{\mathbb C}{\mathbb P}^1}(2d+1)^{\oplus r}
$$
be an $H$--equivariant vector bundle on ${\mathbb C}{\mathbb P}^1$. Then
there is a finite dimensional complex $\mathcal H$--module $M$, which satisfies the condition that the nontrivial element
of ${\mathbb Z}/2{\mathbb Z}\, \subset\, \mathcal H$ acts on $M$ as multiplication by $-1$, such that the $H$--equivariant
vector bundle $W$ is holomorphically and $G$--equivariantly isomorphic to ${\mathcal O}_{{\mathbb C}{\mathbb P}^1}(2d+1)
\bigotimes_{\mathbb C} M$.
\end{enumerate}
\end{proposition}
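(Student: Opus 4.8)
The plan is to mimic the proof of Proposition \ref{prop2} in all three cases, with only the bookkeeping of the relevant central character changing. In each case the key object is the space of global sections
$$
M\ :=\ H^0\bigl({\mathbb C}{\mathbb P}^1,\, \text{Hom}({\mathcal O}_{{\mathbb C}{\mathbb P}^1}(e),\, W)\bigr),
$$
where $e$ is $d$, $2d$, or $2d+1$ according to the case, together with the tautological evaluation map
$$
\Psi\ :\ {\mathcal O}_{{\mathbb C}{\mathbb P}^1}(e)\otimes_{\mathbb C} M\ \longrightarrow\ W,\qquad w\otimes m\ \longmapsto\ m(w).
$$
Since $W\,\cong\, {\mathcal O}_{{\mathbb C}{\mathbb P}^1}(e)^{\oplus r}$ holomorphically, the map $\Psi$ is a holomorphic isomorphism exactly as in Proposition \ref{prop2}; this part is case-independent. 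What changes is the group that naturally acts on $M$ and the check that $\Psi$ intertwines the actions.

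For part (1), the exact sequence \eqref{e10} splits, so we fix a splitting $\gamma\,:\,H\,\longrightarrow\,{\mathcal H}$ and use it to put the $H$--equivariant structure on ${\mathcal O}_{{\mathbb C}{\mathbb P}^1}(d)$. Then $H$ acts on $M$ via its actions on ${\mathcal O}_{{\mathbb C}{\mathbb P}^1}(d)$ and on $W$, exactly as in the $\text{SL}(2,{\mathbb C})$ case, and $\Psi$ is $H$--equivariant by construction; so the conclusion is verbatim the argument of Proposition \ref{prop2}. For part (2), \eqref{e10} need not split but ${\mathcal O}_{{\mathbb C}{\mathbb P}^1}(2d)$ carries its canonical $H$--equivariant structure coming from the $\text{PGL}(2,{\mathbb C})$--action on ${\mathcal O}_{{\mathbb C}{\mathbb P}^1}(2)$; again $H$ acts on $M$ and $\Psi$ is $H$--equivariant, so the same argument applies with $H$ in place of $G$.

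Part (3) is the one requiring a small extra observation, and I expect it to be the only genuine point. Here ${\mathcal O}_{{\mathbb C}{\mathbb P}^1}(2d+1)$ is only ${\mathcal H}$--equivariant, not $H$--equivariant; moreover, as established in the discussion preceding the proposition, the nontrivial central element of ${\mathbb Z}/2{\mathbb Z}\,\subset\,{\mathcal H}$ acts on ${\mathcal O}_{{\mathbb C}{\mathbb P}^1}(2d+1)$ as multiplication by $-1$. The $H$--equivariant structure on $W$ pulls back along $\beta'\,:\,{\mathcal H}\,\longrightarrow\,H$ to an ${\mathcal H}$--equivariant structure on $W$ on which $-I$ acts trivially. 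Hence ${\mathcal H}$ acts on $M\,=\,H^0({\mathbb C}{\mathbb P}^1,\,\text{Hom}({\mathcal O}_{{\mathbb C}{\mathbb P}^1}(2d+1),\,W))$, and on this space $-I$ acts by $(-1)^{-1}\cdot 1\,=\,-1$; that is, $M$ is an ${\mathcal H}$--module of the required type. The evaluation map $\Psi\,:\,{\mathcal O}_{{\mathbb C}{\mathbb P}^1}(2d+1)\otimes_{\mathbb C} M\,\longrightarrow\, W$ is ${\mathcal H}$--equivariant by construction; since on the source $-I$ acts as $(-1)\otimes(-1)\,=\,$ identity, $\Psi$ descends to an $H$--equivariant isomorphism, matching the $H$--structure on $W$ we started with. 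The only thing to be careful about is verifying that the descended $H$--action on ${\mathcal O}_{{\mathbb C}{\mathbb P}^1}(2d+1)\otimes_{\mathbb C} M$ is precisely the $H$--structure appearing in the statement and that $\Psi$ intertwines it with the given $H$--structure on $W$ — this is a routine unwinding of definitions, using that $\beta'$ is surjective with kernel acting trivially on both sides.
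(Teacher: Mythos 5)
Your proposal is correct and follows essentially the same route as the paper: parts (1) and (2) are verbatim the argument of Proposition \ref{prop2}, and for part (3) the paper likewise pulls the $H$--structure on $W$ back to ${\mathcal H}$ via $\beta'$, observes that $-I$ acts on $M = H^0({\mathbb C}{\mathbb P}^1,\, \mathrm{Hom}({\mathcal O}_{{\mathbb C}{\mathbb P}^1}(2d+1),\, W))$ by $-1$ since it acts by $-1$ on ${\mathcal O}_{{\mathbb C}{\mathbb P}^1}(2d+1)$ and trivially on $W$, and then descends the ${\mathcal H}$--equivariant evaluation isomorphism to $H$. Nothing further is needed.
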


\begin{proof}
The proof of the first two statements is exactly identical to the proof of Proposition \ref{prop2}.

The proof of the third statement is also quite similar. The action of $H$ on $W$ produces an action of
$\mathcal H$ on $W$ using the projection $\beta'$ in \eqref{e10}. The actions of $\mathcal H$ on ${\mathcal O}_{{\mathbb C}
{\mathbb P}^1}(2d+1)$ and $W$ together produce an action of $\mathcal H$ on
$$
M\ :=\ H^0({\mathbb C}{\mathbb P}^1, \, \text{Hom}({\mathcal O}_{{\mathbb C}{\mathbb P}^1}(2d+1),\, W)).
$$
The nontrivial element of ${\mathbb Z}/2{\mathbb Z}\, \subset\, \mathcal H$ acts on $M$ as multiplication by $-1$ 
because it acts as multiplication by $-1$ (respectively, $1$) on ${\mathcal O}_{{\mathbb C}{\mathbb P}^1}(2d+1)$
(respectively, $W$).

The natural evaluation homomorphism
$$
\Psi\ :\ {\mathcal O}_{{\mathbb C}{\mathbb P}^1}(2d+1)\otimes_{\mathbb C} M\ \longrightarrow\, W
$$
that sends any $w\otimes m$, where $w\, \in\, {\mathcal O}_{{\mathbb C}{\mathbb P}^1}(2d+1)_x$,
$x\, \in\, {\mathbb C}{\mathbb P}^1$, and $m\, \in\, M$, to $m(w)\, \in\, W_x$, is an isomorphism, and it is
$\mathcal H$ equivariant. The subgroup ${\mathbb Z}/2{\mathbb Z}\, \subset\, \mathcal H$ acts trivially on
${\mathcal O}_{{\mathbb C}{\mathbb P}^1}(2d+1)\otimes_{\mathbb C} M$, because the nontrivial element of
${\mathbb Z}/2{\mathbb Z}\, \subset\, \mathcal H$ acts as multiplication by $-1$ on both
${\mathcal O}_{{\mathbb C}{\mathbb P}^1}(2d+1)$ and $M$.
\end{proof}

\begin{theorem}\label{thm2}\mbox{}
\begin{enumerate}
\item Assume that the exact sequence in \eqref{e10} splits. Any $H$--equivariant vector bundle $W$ on ${\mathbb C}{\mathbb P}^1$
is holomorphically and $H$--equivariantly isomorphic to an $H$--equivariant vector bundle of the form
$$
\bigoplus_{i=1}^r {\mathcal O}_{{\mathbb C}{\mathbb P}^1}(d_i)\otimes_{\mathbb C} M_i,
$$
where each $M_i$ is a finite dimensional complex $H$--module.
Moreover, the collection $\{d_i,\, M_i\}_{i=1}^r$ is uniquely determined by the isomorphism class of the
$H$--equivariant vector bundle $W$.

\item Assume that the exact sequence in \eqref{e10} does not splits. Any $H$--equivariant vector bundle $W$ on ${\mathbb C}{\mathbb P}^1$
is holomorphically and $H$--equivariantly isomorphic to an $H$--equivariant vector bundle of the form
$$
\left(\bigoplus_{i=1}^r {\mathcal O}_{{\mathbb C}{\mathbb P}^1}(2d_i)\otimes_{\mathbb C} M_i\right)
\bigoplus \left(\bigoplus_{i=1}^s {\mathcal O}_{{\mathbb C}{\mathbb P}^1}(2\delta_i+1)\otimes_{\mathbb C} N_i\right),
$$
where each $M_i$ (respectively, $N_i$) is a finite dimensional complex $H$--module (respectively, $\mathcal H$--module such that the action on $N_i$ of the nontrivial element of the subgroup $\mathbb Z/2\mathbb Z\,\subset\, \mathcal H$ is given by multiplication by -1).
Moreover, the collection $\{d_i,\, M_i\}_{i=1}^r\bigcup \{\delta_i,\, N_i\}_{i=1}^s$ is uniquely determined by the
isomorphism class of the $H$--equivariant vector bundle $W$.
\end{enumerate}
\end{theorem}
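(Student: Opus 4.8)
The plan is to combine the structural decomposition into graded pieces (via the Harder--Narasimhan filtration and its equivariant splitting) with the case analysis of Proposition \ref{prop4}. First I would invoke Proposition \ref{prop3}: the Harder--Narasimhan filtration \eqref{e6b} of the $H$--equivariant bundle $W$ is $H$--invariant and admits a holomorphic $H$--equivariant splitting. Hence $W$ is $H$--equivariantly isomorphic to $\bigoplus_{i} Q_i$, where each $Q_i$ is the $i$--th successive quotient of \eqref{e6b}, a semistable $H$--equivariant bundle on $\mathbb{CP}^1$. Since every semistable bundle on $\mathbb{CP}^1$ is of the form $\mathcal{O}_{\mathbb{CP}^1}(e)^{\oplus r_i}$ for a single integer $e$, each $Q_i$ is an $H$--equivariant bundle of this shape, and I can then apply Proposition \ref{prop4} to each summand separately.

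For part (1) --- the split case --- each $Q_i$ has the form $\mathcal{O}_{\mathbb{CP}^1}(d_i)^{\oplus r_i}$ with arbitrary $d_i \in \mathbb{Z}$, and Proposition \ref{prop4}(1) gives a finite-dimensional $H$--module $M_i$ with $Q_i \cong \mathcal{O}_{\mathbb{CP}^1}(d_i)\otimes_{\mathbb{C}} M_i$; assembling the direct sum gives the asserted form. For part (2) --- the non-split case --- I would split each $Q_i$ according to the parity of $e$: if $e = 2d_i$ is even, Proposition \ref{prop4}(2) gives an $H$--module $M_i$ with $Q_i \cong \mathcal{O}_{\mathbb{CP}^1}(2d_i)\otimes_{\mathbb{C}} M_i$; if $e = 2\delta_j+1$ is odd, Proposition \ref{prop4}(3) gives an $\mathcal{H}$--module $N_j$ on which the nontrivial element of $\mathbb{Z}/2\mathbb{Z}\subset\mathcal{H}$ acts by $-1$, with $Q_i \cong \mathcal{O}_{\mathbb{CP}^1}(2\delta_j+1)\otimes_{\mathbb{C}} N_j$. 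Collecting the even pieces and the odd pieces separately yields the stated decomposition. (Different graded pieces may a priori have the same slope only when that slope is the same integer, in which case I simply merge them into a single summand by taking the direct sum of the corresponding modules.)

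For the uniqueness statement I would argue exactly as in the proof of Theorem \ref{thm1}: given any $H$--equivariant bundle written in the stated form, the number of distinct slopes appearing and the slopes themselves are intrinsic --- they are the length and the successive slopes of the Harder--Narasimhan filtration, which is canonical by Lemma \ref{lem1} / Proposition \ref{prop3}. The multiplicity module attached to a slope $e$ is recovered as $H^0(\mathbb{CP}^1,\,\mathrm{Hom}(\mathcal{O}_{\mathbb{CP}^1}(e),\,Q))$ where $Q$ is the corresponding successive quotient: in part (1) this is naturally an $H$--module and equals $M_i$ when $e=d_i$; in part (2) it is naturally an $\mathcal{H}$--module, which descends to an $H$--module when $e$ is even (giving $M_i$) and is an $\mathcal{H}$--module with $-I$ acting by $-1$ when $e$ is odd (giving $N_j$). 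Since $\mathrm{Hom}(\mathcal{O}_{\mathbb{CP}^1}(e),\,\mathcal{O}_{\mathbb{CP}^1}(e')\otimes_{\mathbb{C}} L)$ has vanishing $H^0$ for $e'<e$ and for $e'>e$ has $H^0$ a positive tensor power times $L$, one checks that only the summand with the matching slope contributes, pinning down the module up to isomorphism.

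The main obstacle, such as it is, is bookkeeping rather than a genuine difficulty: one must be careful that the semistable pieces with odd slope carry $\mathcal{H}$--module data (not $H$--module data) and that the parity convention $2d_i$ versus $2\delta_i+1$ is applied consistently, and one must verify that the evaluation map $\Psi$ of Proposition \ref{prop4} respects the grading so that the recovered module is exactly the expected one and not a sum of contributions from several graded pieces. All of this is already essentially contained in Propositions \ref{prop3} and \ref{prop4} and in the uniqueness argument of Theorem \ref{thm1}, so the proof is a short assembly of those ingredients.
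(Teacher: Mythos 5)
Your proposal is correct and follows exactly the route the paper intends: the paper's proof of Theorem \ref{thm2} is simply ``similar to the proof of Theorem \ref{thm1},'' i.e.\ split the Harder--Narasimhan filtration equivariantly via Proposition \ref{prop3}, identify each semistable graded piece using Proposition \ref{prop4} according to the parity of its slope, and recover the data $\{d_i, M_i\}$, $\{\delta_i, N_i\}$ from the canonical Harder--Narasimhan filtration for uniqueness. (One small simplification: the slopes of successive Harder--Narasimhan quotients are strictly decreasing, so your merging step for coincident slopes is never needed.)
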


\begin{proof}
The proof is similar to the proof of Theorem \ref{thm1}.
\end{proof}


\end{document}